\newcommand{\D}{\mathcal{D}}
\newcommand{\CC}{\mathcal{C}}
\newcommand{\one}{\mathbf{1}}
\newcommand{\Z}{\mathbb Z}
\newcommand{\comments}[1]{}
\renewcommand{\one}{\mathbf{1}}
\renewcommand{\CC}{\mathcal{C}}
\renewcommand{\D}{\mathcal{D}}
\renewcommand{\Z}{\mathbb{Z}}
\numberwithin{equation}{section}
\newtheorem{theorem}{Theorem}[section]
\newtheorem{lem}[theorem]{Lemma}
\newtheorem{prop}[theorem]{Proposition}
\theoremstyle{definition}
\newtheorem{definition}[theorem]{Definition}
\begin{document}
\title[]{Classification of Metaplectic Modular Categories}

\author{Eddy Ardonne}
\email{ardonne@fysik.su.se}
\address{Department of Physics\\Stockholm University
    \\Albanova University Center
    \\SE-106 91 Stockholm
    Sweden}

\author{Meng Cheng}
\email{mcheng@microsoft.com}
\address{Microsoft Station Q\\
    University of California\\
    Santa Barbara, CA 93106-6105\\
    U.S.A.}

\author{Eric C. Rowell}
\email{rowell@math.tamu.edu}
\address{Department of Mathematics\\
    Texas A\&M University \\
    College Station, TX 77843\\
    U.S.A.}

\author{Zhenghan Wang}
\email{zhenghwa@microsoft.com}
\address{Microsoft Station Q and Dept of Mathematics\\
    University of California\\
    Santa Barbara, CA 93106-6105\\
    U.S.A.}

\thanks{The first author is supported in part by the Swedish research council. The third and fourth authors are partially supported by NSF grants DMS-1410144 and DMS-1411212 respectively.}

\begin{abstract}

We obtain a classification of metaplectic modular categories: every metaplectic modular category is a gauging of the particle-hole symmetry of a cyclic modular category.  Our classification suggests a conjecture that every weakly-integral modular category can be obtained by gauging a symmetry of a pointed modular category.

\end{abstract}
\maketitle

\section{Introduction}

Achieving a classification of modular categories analogous to the classification of finite abelian groups is an interesting mathematical problem \cite{BNRW1,BNRW2}.  In this note, we classify metaplectic modular categories.  Our classification suggests a close connection between finite abelian groups and weakly integral modular categories via gauging, thus leads to a potential approach to proving the Property F conjecture for weakly integral modular categories \cite{BBCW,CGPW,NR}.

A simple object $X$ is weakly-integral if its squared quantum dimension $d_X^2$ is an integer.  A modular category is weakly-integral if every simple object is weakly-integral.
Inspired by the applications to physics and topological quantum computation, we focus on weakly-integral modular categories \cite{CHW,CW}.  An important class of weakly-integral modular categories is the class of metaplectic modular categories---unitary modular categories with the fusion rules of $SO(N)_2$ for some odd integer $N>1$ \cite{HNW1,HNW2}.  The metaplectic modular categories first appeared in the study of parafermion zero modes, which generalize the Majorana zero modes.  The name {\it metaplectic} comes from the fact that the resulting braid group representations from the generating simple objects in $SO(N)_2$ are the metaplectic representations, which are the symplectic analogues of the spinor representations.  Our main result is a classification of metaplectic modular categories: every metaplectic modular category is a gauging of the particle-hole symmetry of a cyclic modular category.

The property F conjecture says that all braid group representations afforded by a weakly-integral simple object have finite images.  For $SO(N)_2$, the property F conjecture follows from \cite{RW}. It is possible that all weakly-integral modular categories can be obtained from gauging of pointed modular categories---modular categories with all simple objects having their quantum dimension equal to $1$.  Our classification supports this possibility.  If this is true, then a potential approach to the property F conjecture for all weakly-integral modular categories would be to prove that gauging preserves property F.

\section{Cyclic modular categories}

\begin{definition}

Let $\Z_n$ be the cyclic group of $n$ elements.  A $\Z_n$-cyclic modular category is a modular category whose fusion rule is the same as the cyclic group $\Z_n$ for some integer $n$.

\end{definition}

A $\Z_n$-cyclic modular category is determined by a non-degenerate quadratic form $q: \Z_n \rightarrow \mathbb{U}(1)$.  We will denote the $\Z_n$-cyclic modular category determined by such a quadratic form $q$ as $\CC(\Z_n,k)$  for $q(j)=e^{2\pi i s_j}, s_j=\frac{k j^2}{n}, 0\leq j\leq n-1, (k,n)=1$.

First for $M,N$ relatively prime, $\CC(\mathbb{Z}_{MN},k)$ is a direct product of $\CC(\mathbb{Z}_M,kN)$ and $\CC(\mathbb{Z}_N,kM)$. The simple object types, $j$, of $\CC(\Z_{MN},k)$ can be labelled by pairs $(a,b)$, where $j=aM+bN$ and $0\leq a\leq N-1, 0\leq b\leq M-1$. The fusion rules are
\begin{equation}
    j_1\times j_2=(a_1,b_1)\times (a_2,b_2)=([a_1+a_2]_N, [b_1+b_2]_M),
    \label{}
\end{equation}
and the topological twists are $\theta_j=e^{2\pi i s_j}$:
\begin{equation}
    s_j=\frac{kj^2}{MN}=\frac{k(aM+bN)^2}{MN}=\frac{kMa^2}{N}+\frac{kNb^2}{M}+2abk.
    \label{}
\end{equation}

Therefore, we have shown that $\CC(\mathbb{Z}_{MN},k)=\CC(\mathbb{Z}_M,kN)\boxtimes\CC(\mathbb{Z}_N,kM)$.

Next we find all distinct $\mathbb{Z}_{p^a}$-cyclic modular categories, where $p$ is an odd prime.

For $\CC(\mathbb{Z}_{p^a},k)$, write $k=p^l m$, where $p\nmid m$. Note that if $l\geq 1$, the resulting category is not modular (since the form $q(x)=e^{2\pi i kx^2/p^a}$ is degenerate). Therefore, we must assume $(k,p)=1$.
The twist of the $j$-th simple object is $e^{\frac{2\pi ik}{p^{a}}j^2}$. If for $n_1$ and $n_2$, the categories are isomorphic, it means that one can solve the congruent equation
\begin{equation}
    \frac{n_1}{p^{a}}\equiv \frac{n_2j^2}{p^{a}} (\text{mod }1),
    \label{}
\end{equation}
for some $j$ such that $p\nmid j$ (so that $j$ is a generator of $\mathbb{Z}_{p^{a}}$). We need to solve $j^2\equiv n_2^{-1}n_1\,(\text{mod }p^{a})$, which  is solvable when $\left( \frac{n_1}{p^a} \right)=\left( \frac{n_2}{p^a} \right)$. Therefore, there are two distinct theories.

Braided tensor auto-equivalences of the $\Z_n$-cyclic-modular categories are group isomorphisms of $\Z_n$ which preserve the topological twists.  The {\bf particle-hole symmetry} of a $\Z_n$-cyclic modular category with $n$ odd is the $\Z_2$-braided tensor auto-equivalence that maps $j$ to $n-j$.

\section{Metaplectic modular categories}

The unitary modular categories $SO(N)_2$ for odd $N>1$ has $2$ simple objects $X_1, X_2$ of dimension $\sqrt{N}$, two simple objects $\one, Z$ of dimension $1$, and $\frac{N-1}{2}$ objects $Y_i$, $i=1,\ldots,\frac{N-1}{2}$ of dimension $2$.  The fusion rules are:
\begin{enumerate}
 \item $Z\otimes Y_i\cong Y_i$, $Z\otimes X_i\cong X_{i+1}$ (modulo $2$), $Z^{\otimes 2}\cong\one$,
 \item $X_i^{\otimes 2}\cong \one\oplus \bigoplus_{i} Y_i$,
 \item $X_1\otimes X_2\cong Z\oplus\bigoplus_{i} Y_i$,
 \item $Y_i\otimes Y_j\cong Y_{\min\{i+j,N-i-j\}}\oplus Y_{|i-j|}$, for $i\neq j$ and $Y_i^{\otimes 2}=\one\oplus Z\oplus Y_{\min\{2i,N-2i\}}$.
\end{enumerate}
The fusion rules for the subcategory generated by $Y_1$ (with simple objects $\one, Z$ and all $Y_i$) are precisely those of the dihedral group of order $2N$.

\begin{definition}

A metaplectic modular category is a unitary modular category $\CC$ with the same fusion rules as $SO(N)_2$ for some odd $N>1$.

\end{definition}

\begin{theorem}\label{main}

\begin{enumerate}

\item Suppose $\CC$ is a metaplectic modular category with fusion rules $SO(N)_2$, then $\CC$ is a gauging of the particle-hole symmetry of a $\Z_N$-cyclic modular category.
\item For $N=p_1^{\alpha_1}\cdots p_s^{\alpha_s}$ with distinct odd primes $p_i$, there are exactly $2^{s+1}$ many inequivalent metaplectic modular categories.

\end{enumerate}

\end{theorem}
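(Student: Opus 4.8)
The plan is to show that condensing the invertible object $Z$ (equivalently, de-equivariantizing along the canonical Tannakian subcategory $\langle\one,Z\rangle$) inverts the gauging construction; this yields part (1) directly and reduces part (2) to a cohomological count.

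\emph{Part (1).} Let $\CC$ be metaplectic with $SO(N)_2$ fusion rules. The first step is that $Z$ is a boson. The only invertible objects of $\CC$ are $\one$ and $Z$; the subcategory $\D:=\langle Y_1\rangle$ (simple objects $\one,Z,Y_1,\dots,Y_{(N-1)/2}$) has dimension $2N\neq 4N=\dim\CC$, so by M\"uger's theorem $\D$ is not modular and its M\"uger centralizer $\D'$, of dimension $4N/2N=2$, equals $\langle\one,Z\rangle$. Hence $Z$ centralizes $Y_1$, and since $Z\otimes Y_1\cong Y_1$ the monodromy charge gives $\theta_Z^{-1}=1$, so $\theta_Z=1$ and $\mE:=\langle\one,Z\rangle\cong\Rep(\Z_2)$ is Tannakian (symmetric braiding with trivial twist rules out $\mathrm{sVec}$), with $\D=\mE'$ its M\"uger centralizer. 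Now de-equivariantize: $\CC_{\Z_2}$ is a faithfully $\Z_2$-graded $\Z_2$-crossed braided fusion category of dimension $2N$, whose trivial component is the modularization (i.e.\ $\Z_2$-de-equivariantization) of $\mE'=\D$, a modular category $\F$ of dimension $N$.

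\emph{Part (1), conclusion.} I would then identify $\F$: condensation identifies $\one$ with $Z$ and, since $Z\otimes Y_i\cong Y_i$, splits each $Y_i$ into two invertibles, so $\F$ is pointed of rank $1+(N-1)=N$; inspecting the images of the rules $Y_i\otimes Y_j\cong Y_{\min\{i+j,\,N-i-j\}}\oplus Y_{|i-j|}$ shows the group of invertibles is cyclic, whence $\F\cong\CC(\Z_N,k)$ for some $k$ coprime to $N$. The residual $\Z_2$-action on $\F$ is a braided auto-equivalence, i.e.\ multiplication by some $c$ with $c^2\equiv 1\pmod N$; because $X_1$ and $X_2$ merge into a single simple object of dimension $\sqrt N$ in the nontrivial component $(\CC_{\Z_2})_1$, the $g$-sector has a unique simple, forcing the action to be fixed-point-free on $\Z_N\setminus\{0\}$, so $c=-1$: the particle-hole symmetry. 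Since equivariantization inverts de-equivariantization, $\CC=(\CC_{\Z_2})^{\Z_2}$ is the equivariantization of a $\Z_2$-crossed braided extension of $\CC(\Z_N,k)$ realizing the particle-hole action, i.e.\ a gauging of the particle-hole symmetry of a $\Z_N$-cyclic modular category.

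\emph{Part (2).} For the count, I would first check the converse: equivariantizing any $\Z_2$-crossed braided extension of $\CC(\Z_N,k)$ realizing particle-hole yields a modular category with $SO(N)_2$ fusion rules --- the orbit of $0$ contributes $\one,Z$, the $(N-1)/2$ orbits $\{j,-j\}$ contribute the $Y_i$, and the unique $g$-defect contributes $X_1,X_2$, and one verifies the multiplication matches. By Part (1) the two constructions are mutually inverse, so equivalence classes of metaplectic categories with $SO(N)_2$ fusion rules correspond to pairs consisting of a $\Z_N$-cyclic modular category together with a $\Z_2$-crossed braided extension realizing particle-hole. By Section~2 there are $2^s$ choices of $\CC(\Z_N,k)$; for each, the particle-hole action lifts uniquely to a categorical $\Z_2$-action and carries a unique symmetry-fractionalization class (obstructions and ambiguities lie in $H^2(\Z_2,\Z_N)=H^3(\Z_2,\Z_N)=0$ since $N$ is odd), while the extensions realizing it form a torsor over $H^3(\Z_2,\U(1))\cong\Z_2$ with vanishing obstruction in $H^4(\Z_2,\U(1))=0$. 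This gives exactly $2^s\cdot 2=2^{s+1}$ inequivalent metaplectic modular categories with these fusion rules. The technical heart, and the main obstacle, is in Part (1): proving from the fusion rules and modularity alone that the condensed category is pointed with \emph{cyclic} group of invertibles and that the residual symmetry is precisely particle-hole; and in Part (2), the gauging bookkeeping --- justifying vanishing of the obstruction classes and that distinct classes in the $H^3(\Z_2,\U(1))$-torsor give inequivalent modular categories (handled by the mutually-inverse-functors argument, or by separating the two gaugings via the twists $\theta_{X_1},\theta_{X_2}$ and the central charge).
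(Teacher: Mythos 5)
Your proposal is correct and follows essentially the same route as the paper: prove $\theta_Z=1$, condense (de-equivariantize by) $Z$ to get a $\Z_2$-graded category whose trivial sector is pointed with cyclic fusion $\Z_N$, identify the residual $\Z_2$-action as particle-hole, and count $2^s$ cyclic modular categories times the $H^3(\Z_2;\U(1))\cong\Z_2$ gauging choices. Your variations (M\"uger-centralizer dimension count instead of the direct balancing computation, and the fixed-point-free/unique-defect argument pinning the symmetry to $j\mapsto -j$, where the paper simply asserts uniqueness of the $\Z_2$ symmetry) are if anything slightly more careful than the paper's own write-up, while the cyclicity step you flag is handled in the paper by showing a summand of the image of $Y_1$ is a tensor generator.
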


To prove the theorem, we start with two lemmas.

\begin{lem}\label{boson}
The object $Z$ is a boson: $\theta_Z=1$.
\end{lem}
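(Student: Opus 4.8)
The plan is to use that $Z$ is invertible together with the ribbon and hexagon axioms, exploiting the facts (listed among the fusion rules) that $Z\otimes\one = Z$, that $Z\otimes Y_i\cong Y_i$ for all $i$, and that $X_1\otimes X_1\cong\one\oplus\bigoplus_i Y_i$. First recall the basic mechanism: for any invertible object $a$ and any simple object $X$, the object $a\otimes X$ is again simple (since $\Hom(a\otimes X,a\otimes X)\cong\Hom(X,X)$), so the double braiding $c_{X,a}\circ c_{a,X}\in\End(a\otimes X)$ is a scalar, call it $\lambda_{a,X}\in\C^{\times}$. The ribbon axiom $\theta_{a\otimes X}=(\theta_a\otimes\theta_X)\circ c_{X,a}\circ c_{a,X}$ then reads $\lambda_{a,X}=\theta_{a\otimes X}\,\theta_a^{-1}\theta_X^{-1}$. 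In particular $\lambda_{Z,\one}=1$ (from $Z\otimes\one=Z$ and $\theta_\one=1$), and $\lambda_{Z,Y_i}=\theta_{Y_i}(\theta_Z\theta_{Y_i})^{-1}=\theta_Z^{-1}$ for every $i$ (from $Z\otimes Y_i\cong Y_i$).

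The key intermediate claim is that $\lambda_{a,-}$ is multiplicative under fusion: if $W$ is a simple summand of $X\otimes Y$ with $X,Y$ simple, then $\lambda_{a,W}=\lambda_{a,X}\lambda_{a,Y}$. I would prove this by the usual diagram chase with the two hexagon axioms, which identify the double braiding of $a$ past $X\otimes Y$ with the composite ``braid $a$ past $X$, then past $Y$''; hence this double braiding acts on $a\otimes X\otimes Y$ as the scalar $\lambda_{a,X}\lambda_{a,Y}$. On the other hand, for an inclusion $\iota\colon W\hookrightarrow X\otimes Y$, naturality of the braiding gives $(c_{X\otimes Y,a}\circ c_{a,X\otimes Y})\circ(\id_a\otimes\iota)=(\id_a\otimes\iota)\circ(c_{W,a}\circ c_{a,W})=\lambda_{a,W}\,(\id_a\otimes\iota)$, so on the subobject $a\otimes W$ the same endomorphism acts by $\lambda_{a,W}$. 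Comparing the two computations yields $\lambda_{a,W}=\lambda_{a,X}\lambda_{a,Y}$.

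Applying the claim with $a=Z$ to $X_1\otimes X_1\cong\one\oplus\bigoplus_i Y_i$: the summand $\one$ gives $\lambda_{Z,X_1}^{2}=\lambda_{Z,\one}=1$, while any summand $Y_i$ (one exists since $N\ge 3$) gives $\lambda_{Z,X_1}^{2}=\lambda_{Z,Y_i}=\theta_Z^{-1}$. Hence $\theta_Z^{-1}=1$, i.e.\ $\theta_Z=1$, as claimed. The only part carrying real content is the multiplicativity of $\lambda_{Z,-}$ in the middle step, and even that is routine once the hexagons are unwound (it is the categorical form of the statement that the $\Z_2$ ``monodromy charge'' of $Z$ is additive under fusion); so I do not expect a genuine obstacle.
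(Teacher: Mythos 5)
Your proof is correct, but it takes a genuinely different route from the paper. The paper's argument is a two-line modular-data computation: it uses orthogonality of the rows of the $S$-matrix to deduce $S_{YZ}=2$ for any two-dimensional object $Y$, and then applies the numerical balancing formula to $Y\otimes Z\cong Y$ to get $2\theta_Y\theta_Z=S_{YZ}\theta_Y\theta_Z=d_Y\theta_Y=2\theta_Y$, hence $\theta_Z=1$. You instead work purely with the braided ribbon structure: the monodromy of the invertible object $Z$ with a simple object is a scalar $\lambda_{Z,-}$, the ribbon axiom gives $\lambda_{Z,Y_i}=\theta_Z^{-1}$ from $Z\otimes Y_i\cong Y_i$ and $\lambda_{Z,\one}=1$, and the hexagon/naturality argument shows $\lambda_{Z,-}$ is a character of the fusion rules, which applied to $X_1^{\otimes 2}\cong\one\oplus\bigoplus_i Y_i$ forces $\lambda_{Z,X_1}^2$ to equal both $1$ and $\theta_Z^{-1}$. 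All the steps you sketch (simplicity of $Z\otimes X$, multiplicativity of the monodromy scalar via the hexagons, naturality on a summand, existence of some $Y_i$ since $N\geq 3$) are sound. What the paper's route buys is brevity given the modular formalism, since orthogonality of $S$-matrix rows packages the same information at once; what yours buys is independence from nondegeneracy --- it only needs the premodular (ribbon) structure and the fusion rules, at the cost of unwinding the standard hexagon computation establishing that monodromy with an invertible object is multiplicative under fusion.
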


\begin{proof}
Let $Y$ be any of the $\frac{N-1}{2}$ simple objects of dimension $2$.  By orthogonality of the rows of the $S$-matrix, we find that $S_{YZ}=2$.
Observing that $Y\otimes Z\cong Y$, we apply the balancing equation (see e.g. \cite{BK}):
$$S_{ij}\theta_i\theta_j=\sum_{k=0}^{r-1} N_{i^*j}^kd_k\theta_k$$
to obtain: $2\theta_{Y}\theta_Z=S_{YZ}\theta_Y\theta_Z=\theta_Yd_Y=2\theta_Y$.  It follows that $\theta_Z=1$.
\end{proof}

Since $Z$ is a boson (i.e. $\dim(Z)=1$ and $\theta_Z=1$), we may condense (\lq\lq de-equivariantize'' in the categorical language) to obtain a $\Z_2$-graded category \cite{DGNO}.  Since $Z$ interchanges $\one\leftrightarrow Z$ and $X_1\leftrightarrow X_2$ and fixes the $Y_i$ the resulting condensed category $\D:=\CC^{\mathbb{Z}_2}$ has $N$ objects of quantum dimension $1$ in the identity sector $\D_0$ and one object of dimension $\sqrt{N}$ in the non-trivial sector $\D_1$ (see \cite{BBCW}).  Clearly, the fusion rules of $\D_0$ must be identical to those of some abelian group $A$ of order $N$.  In the following, we show that $A\cong\Z_N$.  
As an aside, we point out that the category $\D$ is a Tambara-Yamagami category \cite{TY}.
\begin{lem}\label{fusion}
The fusion rules of $\D_0$ are the same as $\Z_N$.
\end{lem}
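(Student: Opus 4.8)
The plan is to work in the de-equivariantized category $\D=\CC^{\Z_2}$ and to reduce the statement to a purely arithmetic fact about the finite abelian group $A$ of order $N$ whose fusion rules are those of $\D_0$.

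First I would pin down the residual $\Z_2$-action on $\D_0$. Since $Z$ becomes the unit in $\D$, the pair $\one,Z$ maps to the unit object of $\D_0$ (the identity $0\in A$), the pair $X_1,X_2$ maps to the unique object of $\D_1$, and each $Y_i$ maps to a sum $\delta_i\oplus g\delta_i$ of two \emph{distinct} invertible objects of $\D_0$ (distinct because $\D_0$ has no simple of dimension $2$). Hence the $\Z_2$-action on $A$ fixes only $0$ and permutes the remaining $N-1$ elements freely in $(N-1)/2$ orbits of size $2$; as $|A|$ is odd, an involutive automorphism with this property must be inversion $a\mapsto -a$. In particular the $Y_i$ are in bijection with the orbits $\{\delta,-\delta\}$ of nonzero elements of $A$, and the standard compatibility of induction with tensor products gives $Y_i\otimes Y_j=Y_{[\delta_i+\delta_j]}\oplus Y_{[\delta_i-\delta_j]}$ for $i\neq j$ and $Y_i^{\otimes 2}=\one\oplus Z\oplus Y_{[2\delta_i]}$ (with the evident reading when an orbit degenerates). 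Comparing with fusion rule (4) of $SO(N)_2$ --- which is literally the ``sum/difference'' combinatorics of the $\pm$-orbits of $\Z_N$ --- this says exactly that the based ring $R_A$ spanned by $\one$ and the $Y_i$ is isomorphic to the analogous ring $R_{\Z_N}$ built from the $\pm$-orbits of $\Z_N$, so the claim becomes: $R_A\cong R_{\Z_N}$ as based rings $\Longrightarrow A\cong\Z_N$.

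To extract $A$ from $R_A$ I would recover, ring-theoretically, the multiset of element orders of $A$. Since $[2\delta]$ is the unique non-unit basis element of $R_A$ occurring in $[\delta]^{\otimes 2}$, the rule $[\delta]\mapsto[2\delta]$ is a well-defined permutation of the non-unit basis determined by the fusion ring alone, and iterating the identity $[\delta]\otimes[2\delta]=[3\delta]\oplus[\delta]$ and its degenerate variants recovers $[\delta]\mapsto[k\delta]$ for all $k$ coprime to $N$. Consequently the based subring generated by a single non-unit basis element $[\delta]$ has rank $1+\tfrac12(\mathrm{ord}(\delta)-1)=\tfrac12(\mathrm{ord}(\delta)+1)$, so $\mathrm{ord}(\delta)$, and hence the whole multiset $\{\mathrm{ord}(a):a\in A\}$, is an invariant of $R_A$. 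Since the multiset of element orders determines a finite abelian group, $R_A\cong R_{\Z_N}$ forces $A\cong\Z_N$; and as $\D_0$ is pointed, its fusion rules are then precisely those of $\Z_N$.

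The main obstacle is the bookkeeping of the second paragraph: justifying carefully that the restriction and induction functors between $\CC$ and $\D=\CC^{\Z_2}$ identify the $Y_i$ with the $\pm$-orbits of $A$ with $\Z_2$ acting by inversion, and that fusion rule (4) translates exactly into the orbit-fusion identities above --- including the degenerate cases where $\delta_i+\delta_j$ and $\delta_i-\delta_j$ lie in the same orbit or where $3\delta_i=0$. Once this dictionary is established, the remaining arithmetic --- reading off element orders from the based ring $R_A$ and invoking the classical fact that these orders pin down $A$ --- is routine.
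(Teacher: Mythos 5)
Your argument is correct, but it takes a noticeably different route from the paper's. The paper argues directly inside $\D_0$: writing $F(Y_1)=Y_1^1\oplus Y_1^2$, it uses $Y_1^{\otimes 2}\cong\one\oplus Z\oplus Y_2$ to deduce $(Y_1^1)^*\cong Y_1^2$, and then, since every $Y_i$ occurs in a tensor power of $Y_1$, concludes that $Y_1^1$ is a tensor generator of $\D_0$, so the group of invertibles is cyclic of order $N$. You instead identify the residual $\Z_2$-action on $A$ as inversion (via the fixed-point-free involution argument, valid since $N$ is odd), transport the $SO(N)_2$ fusion rules through the tensor functor $F$ to get a based-ring isomorphism between the inversion-orbit ring $R_A$ and $R_{\Z_N}$, and then recover $A$ from the multiset of element orders, read off as ranks of based subrings generated by single basis elements. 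What the paper's proof buys is brevity: one duality observation plus the generation statement. What yours buys is a reusable dictionary (fusion rules of the $Y_i$ $\leftrightarrow$ orbit sums in $\Z[A]$) and the general principle that the orbit ring determines the order statistics of $A$; note, though, that your last step can be short-circuited, since the basis element of $R_A$ corresponding to $[1]\in R_{\Z_N}$ generates a based subring of full rank $(N+1)/2$, forcing $\mathrm{ord}(\delta_1)=N$ and hence $A\cong\Z_N$ directly --- at which point your proof is the paper's generator argument in ring-theoretic clothing. Two small points to tighten: the reason the two summands of $F(Y_i)$ are distinct is not merely that $\D_0$ has no simple of dimension $2$ (that only rules out simplicity of the image, not $F(Y_i)\cong 2\delta$); the quick argument is $\dim\End_{\D}(F(Y_i))=\dim\Hom_{\CC}(Y_i,Y_i\otimes(\one\oplus Z))=2$. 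Likewise, your bijection between the $Y_i$ and the nonzero inversion orbits tacitly uses that distinct $Y_i$ have disjoint images exhausting $A\setminus\{0\}$, which follows from $\Hom_{\D}(F(Y_i),F(Y_j))=0$ for $i\neq j$ together with the counting of invertibles in $\D_0$; both facts are standard and at the level of detail the paper itself assumes, but they deserve a sentence.
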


\begin{proof}
It is enough to find a tensor generator for $\D_0$, that is, a simple object $U$ so that $\{U^{\otimes i}:i\geq 0\}$ contains all simple objects in $\D_0$.
Now under condensation each object $Y_i$ becomes the sum of two invertible simple objects in $\D_0$.  The image of $Y_i$ under condensation is $Y_i^1\oplus Y_i^2$, a sum of invertible simple objects in $\D_0$.  We denote by $\one_0$ the image of $\one$ and $Z$ under condensation (i.e. the unit object in $\D_0$).  We will proceed to show that $Y_1^1$ is a tensor generator for $\D_0$.

From $Y_1^{\otimes 2}\cong \one\oplus Z\oplus Y_2$ we obtain $$(Y_1^1)^{\otimes 2}\oplus (Y_1^2)^{\otimes 2}\oplus 2Y_1^1\otimes Y_1^2= 2\one_0\oplus Y_2^1\oplus Y_2^2.$$  This implies ${Y_1^1}^*=Y_1^2$, so that $Y_1^2$ appears as some tensor power of $Y_1^1$.  Thus  $Y_1^1$ is a tensor generator provided each $Y_i^{(j)}$ appears in some tensor power of $(Y_1^1\oplus Y_1^2)$.  Since every $Y_i$ appears in some tensor power of $Y_1$ the result follows.
\end{proof}

{\bf Proof of Theorem \ref{main}}:  (1) By Lemmas \ref{boson},\ref{fusion}, each metaplectic modular category is obtained from gauging a $\Z_2$ symmetry of a cyclic modular category. But the particle-hole symmetry is the only non-trivial $\Z_2$ symmetry of a cyclic modular category. (2): As discussed above, there are exactly two cyclic modular categories for each prime power factor in $N$.  When gauging the particle-hole symmetry, there is an additional choice parameterized by $H^3(\Z_2;U(1))\cong \Z_2$ \cite{ENO,BBCW,CGPW}.  Therefore, the number of metaplectic modular categories is $2^{s+1}$.

\section{Witt classes and open problems}
Gauging preserves Witt classes \cite{CGPW}.  Therefore, the Witt classes of metaplectic modular categories are the same as those of the corresponding cyclic modular categories.

\begin{prop}
	The modular category $\CC(\mathbb{Z}_{p^{2a}}, q)$ is a quantum double $\mathcal{Z}(\mathrm{Vec}_{\mathbb{Z}_{p^a}}^\omega)$.
\end{prop}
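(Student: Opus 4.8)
The plan is to exhibit a Lagrangian subgroup inside the metric group underlying $\CC(\mathbb{Z}_{p^{2a}},q)$ and then to appeal to the standard correspondence between pointed modular categories and metric groups. Recall from \cite{DGNO} that if a pointed modular category $\CC(A,q)$---attached to a finite abelian group $A$ with a non-degenerate quadratic form $q\colon A\to U(1)$---contains a \emph{Lagrangian subgroup}, i.e.\ a subgroup $L\le A$ with $q|_L\equiv 1$ and $|L|^2=|A|$, then $\C[L]$ carries the structure of a connected \'etale (hence Lagrangian) algebra in $\CC(A,q)$, its category of (left) modules is the pointed category $\mathrm{Vec}_{A/L}^{\omega}$ for a $3$-cocycle $\omega$ obtained from the extension $1\to L\to A\to A/L\to 1$ together with the associated bilinear form $B(x,y)=q(x+y)/\big(q(x)q(y)\big)$, and $\CC(A,q)\cong\mathcal{Z}\big(\mathrm{Vec}_{A/L}^{\omega}\big)$. (The ambiguity in the algebra structure on $\C[L]$ measured by $H^2(L;U(1))$ is irrelevant here, since $L$ will be cyclic.)

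First I would verify that $\mathbb{Z}_{p^{2a}}$ equipped with $q(j)=e^{2\pi i kj^2/p^{2a}}$, $(k,p)=1$, has such a subgroup. Take $L=\langle p^a\rangle\cong\mathbb{Z}_{p^a}\le\mathbb{Z}_{p^{2a}}$. Then $|L|=p^a$, so $|L|^2=p^{2a}=|A|$, and for every integer $t$
\[
q(p^a t)=e^{2\pi i k (p^a t)^2/p^{2a}}=e^{2\pi i k t^2}=1 ,
\]
so $L$ is isotropic, hence Lagrangian; moreover $A/L\cong\mathbb{Z}_{p^a}$. The recalled result then yields $\CC(\mathbb{Z}_{p^{2a}},q)\cong\mathcal{Z}\big(\mathrm{Vec}_{\mathbb{Z}_{p^a}}^{\omega}\big)$ for the resulting $\omega$. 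The same $L$ serves for both inequivalent quadratic forms on $\mathbb{Z}_{p^{2a}}$ classified above, the difference between them being absorbed into $\omega$, which in turn ranges over the generators of $H^3(\mathbb{Z}_{p^a};U(1))\cong\mathbb{Z}_{p^a}$ (a non-generator would yield a non-cyclic group of order $p^{2a}$).

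Since this verification is essentially one line, the only substantive ingredient is the structure theorem quoted from \cite{DGNO}, and I anticipate no genuine obstacle beyond citing it in the precise form above. If a self-contained proof is wanted, the alternative is to compute $\mathcal{Z}\big(\mathrm{Vec}_{\mathbb{Z}_{p^a}}^{\omega}\big)$ by hand for $\omega$ a generator of $H^3(\mathbb{Z}_{p^a};U(1))$: its simple objects are the pairs $(g,\chi)$ with $g\in\mathbb{Z}_{p^a}$ and $\chi$ an $\omega_g$-projective character of $\mathbb{Z}_{p^a}$, all of them invertible, and a direct check shows they form a cyclic group of order $p^{2a}$ whose quadratic form is one of the two on $\mathbb{Z}_{p^{2a}}$, the choice of generator $\omega$ selecting which. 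In that route the conceptual content is slight; the bookkeeping of $\omega$ and of the transgressed cocycles $\omega_g$ would be the only real difficulty---which is precisely why I would favour the Lagrangian-subgroup argument.
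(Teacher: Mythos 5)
Your argument is essentially the same as the paper's: both exhibit the isotropic subgroup generated by $[p^a]$ (equivalently, the Lagrangian algebra $\bigoplus_{n}[np^a]$), observe these objects are bosons with $|L|^2=|A|$, and invoke the correspondence from \cite{DGNO} to conclude that the category is $\mathcal{Z}(\mathrm{Vec}_{\mathbb{Z}_{p^a}}^{\omega})$ for some $\omega$. Your version is correct and in fact spells out the verification and the role of $\omega$ a bit more explicitly than the paper does.
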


\begin{proof}
It is easy to see that regardless of the quadratic form $q$, the simple objects $[np^a]$ are all bosons, for $n=0, 1, \dots, p^a-1$. They form a $\mathbb{Z}_{p^a}$ fusion category. In fact, one can define a Lagrangian subalgebra $\bigoplus_{n=0}^{p^a-1}[np^a]$ of $\CC(\mathbb{Z}_{p^{2a}}, q)$. This shows that  $\CC(\mathbb{Z}_{p^{2a}}, q)$ is indeed a quantum double. Now let us condense this subalgebra, which identifies $[j]$ with $[j+np^a]$. Therefore, one can label the distinct simple objects after condensation by $[j], j=0, 1, \dots, p^a-1$. Hence $\CC(\mathbb{Z}_{p^{2a}}, q)$ must be a quantum double of $\mathbb{Z}_{p^a}$, generally twisted by a class in $H^3$ \cite{DGNO}.
\end{proof}

One open question is to prove property F for all metaplectic modular categories.  Another one is to construct universal computing models from metaplectic modular categories by supplementing braidings with measurements \cite{CW}.

\end{document}